\theoremstyle{plain}
\newtheorem{theorem}{Theorem}
\newtheorem{lemma}[theorem]{Lemma}
\newtheorem{corollary}[theorem]{Corollary}
\newtheorem*{claim}{Claim}
\theoremstyle{definition}
\newtheorem{remark}[theorem]{Remark}
\newtheorem{notation}[theorem]{Notation}
\newcommand{\Stab}{{\mathrm{Stab}}}
\newcommand{\Tot}{{\mathrm{Tot}}}
\newcommand{\Z}{{\mathbb{Z}}}
\title{Complex of injective words revisited}
\author{Wee Liang Gan}
\address{Department of Mathematics, University of California, Riverside, CA 92521, USA.}
\email{wlgan@math.ucr.edu}
\subjclass[2010]{18G35}
\keywords{injective word, homological stability, wreath-product group}
\begin{document}

\begin{abstract}
We give a simple proof that (a generalization of) the complex of injective words has vanishing homology in all except the top degree.
\end{abstract}

\maketitle

\section{Introduction}

Let $A$ be a finite set. An \emph{injective word} of length $r\geqslant 0$ is a sequence $(a_1,\ldots, a_r)$ of pairwise distinct elements of $A$. Let $K(A)$ be the semi-simplicial set whose $(r-1)$-simplices are the injective words of length $r$, for every $r\geqslant 1$. The face maps of $K(A)$ are defined by 
\begin{equation*}
d_{i-1} (a_1,\ldots, a_r) := (a_1, \ldots, \widehat{a_i}, \ldots, a_r) \quad\mbox{ for } i=1,\ldots, r,
\end{equation*}
where $\widehat{x}$ means that the entry $x$ is omitted. In other words, $K(A)$ is the semi-simplicial set of ordered simplices of the abstract simplex whose set of vertices is $A$. We write $|K(A)|$ for the geometric realization of $K(A)$. For example, if $A=\{a,b\}$, then $|K(A)|$ is homeomorphic to a circle:
\begin{equation*}
\begin{xy}
(0,0)*{\bullet}="A"-(3,0)*{a}; 
(30,0)*{\bullet}="B"+(3,0)*{b}; 
"A"; "B" **\crv{(15,10)}\POS?*+!D{(a,b)};
"A"; "B" **\crv{(15,-10)}\POS?*+!U{(b,a)}
\end{xy}
\end{equation*}

In a 1979 paper, Farmer \cite[Theorem 5]{Fa} shows that the reduced homology of $K(A)$ vanishes in all degrees $\neq |A|-1$. Subsequently, a new proof was found by Bj\"orner and Wachs \cite[Theorem 6.1]{BW} using their theory of CL-shellable posets. A simpler proof of Farmer's result was given by Kerz \cite[Theorem 1]{Ke} in 2004. Both the proofs of Farmer and Kerz proceed by somewhat ad hoc calculations. More recently, topological proofs of Farmer's result are given by Bestvina \cite[Claim in the proof of Proposition 6]{Be} and Randal-Williams \cite[Proposition 3.2]{Ra}. We should mention that Bj\"orner-Wachs and Randal-Williams actually proved the stronger result that $|K(A)|$ is homotopy equivalent to a wedge of spheres of dimension $|A|-1$.

The purpose of our present note is to give a simple and natural algebraic proof of Farmer's result; indeed, our proof is a straightforward exercise on the spectral sequence of a filtered complex. Our interest in this result stems from the crucial role it plays in Quillen's method \cite{Qu} for proving homological stability of the symmetric groups. Following Hatcher and Wahl \cite{HW}, we shall formulate and prove a slightly more general theorem so that it can be applied in the proof of homological stability of wreath-product groups.

\begin{notation} \label{wreath product notation}
We write $S_n$ for the symmetric group on $\{1,\ldots, n\}$. For any group $G$, we write $G_n$ for the wreath-product group $G\wr S_n$, that is, $G_n := S_n\ltimes G^n$. In particular, $G_0$ is the trivial group.
\end{notation}

\section{The main result}

Recall that $A$ denotes a finite set. Let $\Gamma$ be a nonempty set. We define a chain complex $C_*(A)$ concentrated in degrees $0,\ldots, |A|$ as follows. Let $C_r(A)$ be the free abelian group generated by the set $\Delta_r(A)$ consisting of all elements of the form $(a_1, \ldots, a_r, \gamma_1, \ldots, \gamma_r)$ where $a_1,\ldots, a_r$ are pairwise distinct elements of $A$, and $\gamma_1,\ldots, \gamma_r$ are any elements of $\Gamma$; in particular, $C_r(A)=\Z$ if $r=0$. The differential is defined by 
\begin{equation*}
d(a_1, \ldots, a_r, \gamma_1, \ldots, \gamma_r) := \sum_{i=1}^r (-1)^{i-1} (a_1, \ldots, \widehat{a_i}, \ldots, a_r, \gamma_1, \ldots, \widehat{\gamma_i}, \ldots, \gamma_r).
\end{equation*} 
In particular, $d(a_1,\gamma_1)=1$. 

For any chain complex $C_*$ and positive integer $p$, we shall write $C_{*-p}$ for the $p$-fold suspension of $C_*$.

\begin{remark}
If $\Gamma$ is a singleton set, then $C_*(A)$ is the augmented chain complex of $K(A)$ with degrees shifted up by 1. (Topologically, it is more natural to place a word of length $r$ in degree $r-1$. Algebraically, it seems more natural for us to place a word of length $r$ in degree $r$.) 

If $\Gamma$ is a group $G$, then $C_*(A)$ is the augmented chain complex of the semi-simplicial set $W_{|A|} (\emptyset, \{1\})$ (defined by Randal-Williams and Wahl in \cite[Definition 2.1]{RaW}) with degrees shifted up by 1, associated to the category $\mathrm{FI}_G$ (defined by Sam and Snowden in \cite{SS}).
\end{remark}

\begin{theorem} \label{connectivity}
If $r<|A|$, then $H_r(C_*(A))=0$.
\end{theorem}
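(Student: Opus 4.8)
The plan is to induct on $|A|$, the base cases $|A|\le 1$ being immediate: when $|A|=1$ the map $\bigoplus_{\gamma\in\Gamma}C_1(A)\to C_0(A)=\Z$ is surjective since $d(a,\gamma)=1$, so $H_0=0$. Fix an element $a_0\in A$ and set $A'=A\setminus\{a_0\}$. I would filter $C_*(A)$ by the position of $a_0$: for a generator $x=(a_1,\ldots,a_r,\gamma_1,\ldots,\gamma_r)$ put $\mathrm{pos}(x)=j$ if $a_0=a_j$ and $\mathrm{pos}(x)=0$ if $a_0\notin\{a_1,\ldots,a_r\}$, and let $F_pC_*(A)$ be spanned by the generators with $\mathrm{pos}(x)\le p$. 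Deleting an entry either leaves the position of $a_0$ unchanged, decreases it, or (when $a_0$ itself is deleted) sends it to $0$, so each $F_p$ is a subcomplex; the filtration is increasing and bounded, with $F_0\subseteq\cdots\subseteq F_{|A|}=C_*(A)$.

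Next I would identify the associated graded. The bottom piece is $\mathrm{gr}_0=C_*(A')$. For $p\ge 1$ the induced differential on $\mathrm{gr}_p$ retains only the terms deleting an entry strictly to the right of $a_0$, so a generator splits into a fixed ``head'' $(a_1,\ldots,a_{p-1},a_0,\gamma_1,\ldots,\gamma_p)$ and a ``tail'' on which the differential acts exactly as that of $C_*(A'')$, where $A''=A\setminus\{a_0,a_1,\ldots,a_{p-1}\}$ has $|A''|=|A|-p$. Thus $\mathrm{gr}_p\cong\bigoplus C_{*-p}(A'')$, a direct sum of $p$-fold suspensions indexed by the heads; this is precisely where the suspension of the notation appears. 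The resulting spectral sequence has $E^1_{p,q}=H_{p+q}(\mathrm{gr}_p)$, so $E^1_{0,q}=H_q(C_*(A'))$ and, for $p\ge 1$, $E^1_{p,q}=\bigoplus H_q(C_*(A''))$ with $|A''|=|A|-p$. By the inductive hypothesis $H_q(C_*(A''))=0$ for $q<|A|-p$ and $H_q(C_*(A'))=0$ for $q<|A|-1$, so in total degree $p+q<|A|$ every $E^1_{p,q}$ vanishes except possibly $E^1_{0,|A|-1}=H_{|A|-1}(C_*(A'))$.

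It remains to kill this last group, and here I would use that $\Gamma\neq\emptyset$. We have $\mathrm{gr}_1=\bigoplus_{\gamma\in\Gamma}C_{*-1}(A')$, and the only component of the full differential that lowers the filtration from $1$ to $0$ is the $i=1$ term, which deletes $a_0$ with sign $+1$ and carries each tail identically into $\mathrm{gr}_0=C_*(A')$. Consequently, for each $\gamma\in\Gamma$ the corresponding component of $d^1\colon E^1_{1,|A|-1}\to E^1_{0,|A|-1}$ is induced by the identity chain map of $C_{*-1}(A')$ and is therefore an isomorphism on homology. Since $\Gamma$ is nonempty, $d^1$ is surjective, whence $E^2_{0,|A|-1}=0$; as $(0,|A|-1)$ admits no outgoing and, after $d^1$, no effective incoming differentials, $E^\infty_{0,|A|-1}=0$. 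Every $E^\infty_{p,q}$ with $p+q<|A|$ thus vanishes, and the theorem follows by convergence.

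The main obstacle is precisely the bottom piece $\mathrm{gr}_0=C_*(A')$: its homology in degree $|A'|=|A|-1<|A|$ need not vanish, so the associated graded alone does not give the result. One genuinely has to check that the $d^1$ differential emanating from the suspended copies of $C_*(A')$ sitting in $\mathrm{gr}_1$ surjects onto $E^1_{0,|A|-1}$, and it is this step that forces the hypothesis $\Gamma\neq\emptyset$.
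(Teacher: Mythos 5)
Your proof is correct and is essentially the same as the paper's: you use the identical filtration by the position of the distinguished element $a_0$, the same identification of the associated graded pieces as direct sums of suspensions $C_{*-p}(A'')$, and the same final step showing $d^1\colon E^1_{1,|A|-1}\to E^1_{0,|A|-1}$ is surjective (componentwise the identity) using $\Gamma\neq\emptyset$. The only cosmetic difference is that you treat $|A|\leqslant 1$ as base cases where the paper starts the induction at $|A|=0$.
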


\begin{proof}
Set $n=|A|$. We use induction on $n$. The base case $n=0$ is trivial. 

Suppose $n>0$. Choose and fix an element $a\in A$. For each $r\geqslant 0$, there is an increasing filtration on $C_r(A)$ defined by letting $F_pC_r(A)$ be the subgroup spanned by all elements $(a_1, \ldots, a_r, \gamma_1, \ldots, \gamma_r)$ such that none of $a_{p+1},\ldots, a_r$ is equal to $a$. This gives an increasing filtration on the complex $C_*(A)$ and hence a first-quadrant spectral sequence:
\begin{equation*}
E^1_{p,q} = H_{p+q}\left( F_pC_*(A)/F_{p-1}C_*(A) \right) \Rightarrow H_{p+q}(C_*(A)).
\end{equation*}

Observe that $F_0C_*(A)=C_*(A\setminus\{a\})$. For each $p\geqslant 1$, there is an isomorphism of chain complexes:
\begin{equation*}
F_pC_*(A)/F_{p-1}C_*(A) \cong \bigoplus_{(a_1,\ldots,a_{p-1}, a, \gamma_1,\ldots,\gamma_p)\in \Delta_p(A)} C_{*-p}(A\setminus\{a_1, \ldots, a_{p-1}, a\})
\end{equation*}
where an element on the left hand side represented by $(a_1, \ldots, a_r, \gamma_1, \ldots, \gamma_r)\in F_pC_r(A)$ with $a_p=a$ is identified, on the right hand side, with the element $(a_{p+1}, \ldots, a_r, \gamma_{p+1}, \ldots, \gamma_r)$ in the direct summand $C_{r-p}(A\setminus\{a_1, \ldots, a_{p-1}, a\})$ indexed by $(a_1,\ldots,a_p,\gamma_1,\ldots,\gamma_p)\in \Delta_p(A)$. 

By the induction hypothesis, one has: 
\begin{gather*}
E^1_{0,q}=0 \quad \mbox{ whenever} \quad q<n-1;\\
E^1_{p,q}=0 \quad \mbox{ whenever } \quad p \geqslant 1 \mbox{ and } p+q<n.
\end{gather*}
Therefore, it only remains to show that $E^\infty_{0,n-1}=0$.

We have:
\begin{equation*}
E^1_{0,n-1} = H_{n-1}(C_*(A\setminus\{a\})), \qquad E^1_{1,n-1} = \bigoplus_{\gamma_1\in \Gamma} H_{n-1}(C_*(A\setminus\{a\})).
\end{equation*}
The restriction of the differential $d^1 : E^1_{1,n-1} \to E^1_{0,n-1}$ to each direct summand in the above decomposition of $E^1_{1,n-1}$ is the identity map on $H_{n-1}(C_*(A\setminus\{a\}))$; this follows from the identity
\begin{multline*}
d(a, a_2, \ldots, a_n, \gamma_1, \ldots, \gamma_n) = (a_2, \ldots, a_n, \gamma_2, \ldots, \gamma_n) \\- \sum_{i=1}^{n-1} (-1)^{i-1} (a, a_2, \ldots, \widehat{a_{i+1}},\ldots, \gamma_1, \gamma_2, \ldots, \widehat{\gamma_{i+1}}, \ldots) .
\end{multline*}
In particular, the map $d^1 : E^1_{1,n-1} \to E^1_{0,n-1}$ is surjective. It follows that $E^2_{0,n-1}=0$, and we are done.
\end{proof}

\begin{remark}
It was pointed out to us by the referee that the last step in the above proof can be replaced by the following argument. Fixing an element $e \in \Gamma$, the map 
\begin{equation*}
(a_1,\ldots,a_r,\gamma_1,\ldots,\gamma_r) \mapsto (a,a_1,\ldots,a_r,e,\gamma_1,\ldots,\gamma_r)
\end{equation*}
gives a null-homotopy for the inclusion map $C_*(A-\{a\}) \to C_*(A)$. Thus, the edge map $E^1_{0,n-1} \longrightarrow E^\infty_{0,n-1} \subset H_{n-1}(C_*(A))$ is zero; hence, $E^\infty_{0,n-1}=0$.
\end{remark}

\begin{remark}
Let $n=|A|$. Since $H_n(C_*(A))$ is a subgroup of the free abelian group $C_n(A)$, it is a free abelian group; we make the following observations on its rank.

(i) Suppose $|\Gamma|=\infty$. If $n =|A|\geqslant 1$, then $H_n(C_*(A))$ is a free abelian group of infinite rank. This is clear if $n=1$. For $n>1$, it follows from noticing that by induction, the kernel of $d^1 : E^1_{1,n-1} \to E^1_{0,n-1}$ (in the spectral sequence in the proof above) is a free abelian group of infinite rank, and which is $E^\infty_{1,n-1}$.

(ii) Suppose $|\Gamma|=\ell < \infty$. If $n=|A|\geqslant 1$, then it follows from Theorem \ref{connectivity} and the Euler-Poincar\'e principle that $H_n(C_*(A))$ is a free abelian group of rank $d(\ell)_n$, where 
\begin{equation*}
d(\ell)_n := \sum_{i=0}^n \frac{(-1)^i n!\ell^{n-i}}{i!}.
\end{equation*}
For $\ell=1$, this observation is due to Farmer \cite[Remark on page 613]{Fa} and Reiner-Webb \cite[Proposition 2.1]{RW}. It is well known that $d(1)_n$ is equal to the number of derangements in the symmetric group $S_n$. Let us give a similar interpretation of $d(\ell)_n$ for any $\ell\geqslant 1$. Fixing an element $e\in \Gamma$, we claim that $d(\ell)_n$ is the number of elements $(\pi, \gamma_1, \ldots, \gamma_n) \in S_n \times \Gamma^n$ such that: if $1\leqslant a\leqslant n$ and $\pi(a)=a$, then $\gamma_a \neq e$. To see this, let 
\begin{equation*}
T_a := \{ (\pi, \gamma_1, \ldots, \gamma_n) \in S_n \times \Gamma^n \mid \pi(a)=a \mbox{ and } \gamma_a=e \} \quad \mbox{ for }a = 1,\ldots, n.
\end{equation*}
Then, for any $1\leqslant a_1 < \cdots < a_i \leqslant n$, one has $|T_{a_1} \cap \cdots \cap T_{a_i}| = (n-i)!\ell^{n-i}$. Hence, by the inclusion-exlusion principle, we have $d(\ell)_n = |S_n \times \Gamma^n| - |T_1 \cup \cdots \cup T_n|$, as claimed. 

We note that for any group $G$, the wreath-product group $G_n$ (see Notation \ref{wreath product notation}) acts on $\{1,\ldots, n\}\times G$ by $(\pi; g_1, \ldots g_n) \cdot (a, \gamma) := (\pi(a), g_a \gamma)$, where $(\pi; g_1,\ldots,g_n)\in G_n$ and $(a,\gamma) \in \{1,\ldots, n\} \times G$. When $G$ is a finite group of order $\ell$, the integer $d(\ell)_n$ is equal to the number of elements of $G_n$ which has no fixed point in $\{1,\ldots, n\}\times G$.
\end{remark}

\section{Application to homological stability}

Nakaoka \cite[Corollary 6.7]{Na} proved that the natural inclusion map $S_{n-1} \to S_n$ induces an isomorphism in homology $H_m(S_{n-1}) \to H_m(S_n)$ if $n>2m$. His result was generalized by Hatcher and Wahl \cite[Proposition 1.6]{HW} to wreath-product groups (although as they noted in their paper the generalization might have been known for a long time).

\begin{corollary} \label{stability}
Let $G$ be a group. The natural inclusion map $G\wr S_{n-1} \to G\wr S_n$ induces an isomorphism in homology $H_m(G\wr S_{n-1}) \to H_m(G\wr S_n)$ if $n>2m$.
\end{corollary}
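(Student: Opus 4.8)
The plan is to feed Theorem~\ref{connectivity} into Quillen's method, in the form used by Hatcher and Wahl. Take $A=\{1,\dots,n\}$ and $\Gamma=G$, and write $G_n=G\wr S_n$ as in Notation~\ref{wreath product notation}. The group $G_n$ acts on $\{1,\dots,n\}\times G$ by $(\pi;g_1,\dots,g_n)\cdot(a,\gamma)=(\pi(a),g_a\gamma)$, and hence on each generating set $\Delta_r(A)$ of $C_r(A)$. First I would check that this action is transitive, with the stabilizer of a length-$r$ word being the copy of $G_{n-r}$ acting on the unused coordinates; thus $C_r(A)\cong\Z[G_n/G_{n-r}]$ as a $\Z G_n$-module, and each face inclusion realizes a standard inclusion $G_{n-r}\hookrightarrow G_{n-r+1}$, well defined up to conjugacy.

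Next I would form the double complex $B_*(G_n)\otimes_{\Z G_n}C_*(A)$, where $B_*(G_n)$ is the bar resolution of $\Z$ over $\Z G_n$, and compare its two spectral sequences. Computing the rows first, the freeness of each $B_q(G_n)$ identifies them with copies of the complex $C_*(A)$; by Theorem~\ref{connectivity} (and the fact that $C_*(A)$ is concentrated in degrees $\leqslant n$) the homology is concentrated in degree $n$, so the total homology $H_k(\Tot)$ vanishes for $k<n$. The other filtration, together with Shapiro's lemma applied to $C_p(A)\cong\Z[G_n/G_{n-p}]$, produces a spectral sequence
\[
E^1_{p,q}=H_q(G_{n-p})\ \Longrightarrow\ H_{p+q}(\Tot)=0\quad\text{for }p+q<n,
\]
whose $d^1$ is the alternating sum of the maps induced by the face inclusions. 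The key point is that all of these face inclusions induce one and the same stabilization map $\sigma\colon H_q(G_{n-p})\to H_q(G_{n-p+1})$ (they differ by inner automorphisms of $G_{n-p+1}$, which act trivially on homology), so the alternating sum telescopes to $\sigma$ when $p$ is odd and to $0$ when $p$ is even.

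The $E^2$-page therefore consists entirely of kernels and cokernels of stabilization maps; in particular $E^2_{0,q}=\operatorname{coker}\bigl(H_q(G_{n-1})\to H_q(G_n)\bigr)$ and $E^2_{1,q}=\ker\bigl(H_q(G_{n-1})\to H_q(G_n)\bigr)$, while the entries with $p\geqslant 2$ are kernels and cokernels of the maps $H_q(G_{n-2j-1})\to H_q(G_{n-2j})$. I would then induct on $m$. Surjectivity of $H_m(G_{n-1})\to H_m(G_n)$ for $n\geqslant 2m$ is the statement $E^2_{0,m}=0$: the abutment forces $E^\infty_{0,m}=0$ since $m<n$, and the higher differentials entering $(0,m)$ start at terms living in homological degrees strictly below $m$, which the inductive hypothesis makes vanish in this range. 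Injectivity for $n\geqslant 2m+1$ is the same computation applied to $E^2_{1,m}$, the extra unit in the bound being exactly what the worst incoming differential requires; together these give the asserted isomorphism for $n>2m$.

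The step I expect to be the main obstacle is the bookkeeping in this last induction: one must verify that for $n\geqslant 2m$ (respectively $n\geqslant 2m+1$) every higher differential hitting $E^s_{0,m}$ (respectively $E^s_{1,m}$) emanates from a kernel or cokernel of $\sigma$ in a homological degree and at a group index where the inductive hypothesis already gives vanishing, and it is the precise indexing of these sources that pins down the constant in $n>2m$. The only other point needing care is the identification of all face maps with a single map on $H_*(G_{n-p})$, which is the elementary observation that inner automorphisms act trivially on group homology.
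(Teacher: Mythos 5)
Your proposal is correct and follows essentially the same route as the paper: the double complex $F_*\otimes_{G_n}C_*(A)$ with its two spectral sequences, the Shapiro identification $E^1_{p,q}\cong H_q(G_{n-p})$, the identification of all face maps with the single stabilization map via inner automorphisms (the content of Lemma~\ref{lemma for quillen argument}), and an induction on $m$. The only cosmetic differences are your use of the bar resolution in place of an arbitrary free resolution and your packaging of the induction as separate surjectivity ($n\geqslant 2m$) and injectivity ($n\geqslant 2m+1$) statements, where the paper instead proves a single vanishing claim $E^2_{r,s}=0$ for $r+2s\leqslant 2m+1$, $s<m$; both bookkeepings check out and yield the same bound $n>2m$.
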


Corollary \ref{stability} follows from Theorem \ref{connectivity} by a standard argument of Quillen; see \cite[Section 5]{HW}. We give the details of this argument below since our injectivity range $n>2m$ is better than the one stated in \cite[Proposition 1.6]{HW} by 1.

\emph{From now on, we set $A=\{1,\ldots, n\}$ and $\Gamma=G$, where $n$ is an integer $\geqslant 1$ and $G$ is a group.} 

There is a natural action of $G_n$ (see Notation \ref{wreath product notation}) on $C_r(A)$ defined by
\begin{equation*}
(\pi; g_1,\ldots,g_n)\cdot (a_1,\ldots,a_r, \gamma_1,\ldots, \gamma_r) := (\pi(a_1), \ldots, \pi(a_r), g_{a_1}\gamma_1,\ldots, g_{a_r}\gamma_r ),
\end{equation*}
where $(\pi; g_1,\ldots, g_n)\in G_n$ and $(a_1,\ldots,a_r, \gamma_1,\ldots, \gamma_r)\in C_r(A)$. Define the map \begin{equation*}
d_{i-1} : C_r(A) \longrightarrow C_{r-1}(A) \quad\mbox{ for } i=1,\ldots, r,
\end{equation*}
by
\begin{equation*}
d_{i-1} (a_1,\ldots, a_r, \gamma_1,\ldots, \gamma_r) := (a_1, \ldots, \widehat{a_i}, \ldots, a_r, \gamma_1, \ldots, \widehat{\gamma_i},\ldots, \gamma_r).
\end{equation*}
Since the map $d_{i-1}$ is $G_n$-equivariant, there is an induced map 
\begin{equation*}
(d_{i-1})_* : H_*(G_n; C_r(A)) \to H_*(G_n; C_{r-1}(A)).
\end{equation*}

The group $G_n$ acts transitively on the basis $\Delta_r(A)$ of $C_r(A)$. For any $x\in \Delta_r(A)$, we write $\Stab(x)$ for its stablizer in $G_n$. By Shapiro's lemma, the natural inclusion map $\Stab(x) \to G_n$ and the map $\Z \to C_r(A),\; \lambda \mapsto \lambda x$ induce an isomorphism
\begin{equation*}
\alpha(x)_* : H_*(\Stab(x)) \longrightarrow H_*(G_n; C_r(A)).
\end{equation*} 

Denote by $e\in G$ the identity element. Let 
\begin{equation*}
x_r := (n-r+1,\ldots, n, e, \ldots, e) \in C_r(A).
\end{equation*}
Then $\Stab(x_r) = G_{n-r} \leqslant G_n$. In particular, $x_0 = 1 \in \Z$ and $\Stab(x_0) = G_n$. We write $\iota: \Stab(x_r) \to \Stab(x_{r-1})$ for the natural inclusion map.

\begin{lemma} \label{lemma for quillen argument}
For every $i=1,\ldots, r$, the following diagram commutes:
\begin{equation*}
\xymatrix{
H_*(\Stab(x_r)) \ar[rr]^{\iota_*} \ar[d]_{\alpha(x_r)_*} && H_*(\Stab(x_{r-1})) \ar[d]^{\alpha(x_{r-1})_*} \\	
H_*(G_n; C_r(A)) \ar[rr]^-{(d_{i-1})_*} && H_*(G_n; C_{r-1}(A))
}.
\end{equation*}
\end{lemma}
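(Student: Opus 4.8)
The plan is to verify the commutativity by passing to the chain level and using a single projective resolution for all the groups involved. I would fix a projective resolution $P_* \to \Z$ of $\Z$ by right $\Z G_n$-modules; restricting the action along any subgroup $L \leqslant G_n$ keeps $P_*$ a projective resolution over $\Z L$, so that $H_*(L) = H_*(P_* \otimes_{\Z L} \Z)$, while $H_*(G_n; M) = H_*(P_* \otimes_{\Z G_n} M)$ for any $G_n$-module $M$. In this language the three maps in the diagram are realized by explicit chain maps: $\alpha(x)_*$ comes from $p \otimes 1 \mapsto p \otimes x$ (well defined because $x$ is fixed by $\Stab(x)$), the inclusion $\iota$ induces the natural map $p \otimes 1 \mapsto p \otimes 1$, and $(d_{i-1})_*$ comes from $p \otimes c \mapsto p \otimes d_{i-1}(c)$, which is legitimate since $d_{i-1}$ is $G_n$-equivariant.

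First I would chase the two composites, each a chain map $P_* \otimes_{\Z \Stab(x_r)} \Z \to P_* \otimes_{\Z G_n} C_{r-1}(A)$. The top-then-right composite sends $p \otimes 1 \mapsto p \otimes x_{r-1}$, while the left-then-bottom composite sends $p \otimes 1 \mapsto p \otimes d_{i-1}(x_r)$. Everything therefore reduces to comparing the two basis elements $x_{r-1}$ and $d_{i-1}(x_r)$ of $C_{r-1}(A)$, which lie in the same $G_n$-orbit. I would write down an explicit $g \in S_n \leqslant G_n$ (with trivial group components) carrying $x_{r-1}$ to $d_{i-1}(x_r)$: since both words involve only the letters $n-r+1, \ldots, n$ together with the identity $e \in G$, one may take $g$ to be a cyclic permutation of the block $\{n-r+1, \ldots, n-r+i\}$ fixing every other letter. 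Using $d_{i-1}(x_r) = g \cdot x_{r-1}$ and the relation $p \otimes gy = pg \otimes y$ over $\Z G_n$, the bottom composite rewrites as $p \otimes 1 \mapsto pg \otimes x_{r-1}$.

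It then remains to show that $p \otimes 1 \mapsto pg \otimes x_{r-1}$ and $p \otimes 1 \mapsto p \otimes x_{r-1}$ induce the same map on homology; this is the crux, and the reason the explicit form of $g$ matters. By construction $g$ permutes only the letters $n-r+1, \ldots, n$ and has trivial group components, whereas $\Stab(x_r) = G_{n-r}$ involves only the letters $1, \ldots, n-r$; as these supports are disjoint, $g$ centralizes $\Stab(x_r)$. Hence both $p \mapsto pg$ and $p \mapsto p$ are augmentation-preserving chain endomorphisms of $P_*$ that are $\Z\,\Stab(x_r)$-linear (the target carrying the restricted $G_n$-action), so by the comparison theorem for projective resolutions they are chain homotopic via a $\Z\,\Stab(x_r)$-linear homotopy $s$. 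The assignment $p \otimes 1 \mapsto s(p) \otimes x_{r-1}$ is then a well-defined chain homotopy between the two composites above, so they agree on homology and the diagram commutes.

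I expect the main obstacle to be exactly this last step: recognizing that the discrepancy between the two composites is precisely right multiplication by a coset representative $g$, and that this becomes invisible on homology because $g$ may be chosen to centralize $\Stab(x_r)$ (equivalently, conjugation by $g$ restricts to the identity of $\Stab(x_r)$, so it induces the identity on $H_*(\Stab(x_r))$). I would also note that the chain map $p \otimes 1 \mapsto pg \otimes x_{r-1}$ is independent of the choice of such $g$, since replacing $g$ on the right by an element of $\Stab(x_{r-1})$ leaves $pg \otimes x_{r-1}$ unchanged; this is what allows me to select the centralizing representative in the first place.
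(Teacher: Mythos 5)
Your proof is correct, and it pivots on the same key discovery as the paper's: the two basis elements $x_{r-1}$ and $d_{i-1}(x_r)$ of $C_{r-1}(A)$ are carried into each other by a cyclic permutation of the block $\{n-r+1,\ldots,n-r+i\}$ with trivial $G$-components (your $g$ is the inverse of the paper's $t$), and this transporting element can be chosen to centralize $\Stab(x_r)=G_{n-r}$ because the supports are disjoint. Where you differ is in execution. The paper stays at the level of homology: it factors the bottom composite through the intermediate stabilizer $\Stab(y)$ for $y=d_{i-1}(x_r)$, splices two commuting squares, uses the centralizing property to get $\kappa\circ\jmath=\iota$ for the conjugation map $\kappa(u)=tut^{-1}$, and then quotes \cite[Proposition III.8.1]{Br} to conclude that the map $\delta_*$ (inner automorphism of $G_n$ together with translation by $t$ on coefficients) is the identity on $H_*(G_n;C_{r-1}(A))$. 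You instead work at the chain level with a single projective resolution, reduce everything to comparing $p\otimes x_{r-1}$ with $pg\otimes x_{r-1}$, and build the null-homotopy by hand via the comparison theorem, using that right multiplication by $g$ is $\Z\,\Stab(x_r)$-linear precisely because $g$ centralizes $\Stab(x_r)$; in effect you re-prove, in the special case needed, the instance of Brown's Proposition III.8.1 that the paper cites. Your route is more self-contained and makes the mechanism transparent, while the paper's is shorter at the homology level and delegates the conjugation-triviality to a standard reference. One small point worth spelling out in your write-up: the well-definedness of the homotopy $p\otimes 1\mapsto s(p)\otimes x_{r-1}$ on $P_*\otimes_{\Z\,\Stab(x_r)}\Z$ uses both the $\Z\,\Stab(x_r)$-linearity $s(pu)=s(p)u$ and the containment $\Stab(x_r)\leqslant\Stab(x_{r-1})$, i.e. $u\cdot x_{r-1}=x_{r-1}$ for $u\in\Stab(x_r)$ --- both hold here, but the latter is the reason the formula descends to the coinvariants.
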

\begin{proof}
The diagram clearly commutes for $i=1$ because $d_0(x_r)=x_{r-1}$.

Suppose $i>1$. Let $y := d_{i-1}(x_r)$, so
\begin{equation*}
y = (n-r+1, \ldots, \widehat{n-r+i}, \ldots, n, e, \ldots, e) \in C_{r-1}(A).
\end{equation*}
Write $\jmath : \Stab(x_r) \to \Stab(y)$ for the natural inclusion map. Then there is a commuting diagram:
\begin{equation} \label{commuting diagram 1}
\xymatrix{
H_*(\Stab(x_r)) \ar[rr]^{\jmath_*} \ar[d]_{\alpha(x_r)_*} && H_*(\Stab(y)) \ar[d]^{\alpha(y)_*} \\	
H_*(G_n; C_r(A)) \ar[rr]^-{(d_{i-1})_*} && H_*(G_n; C_{r-1}(A))
}.
\end{equation}

Let $\mu$ be the cyclic permutation $(n-r+1, \ldots, n-r+i)\in S_n$ and let 
\begin{equation*}
t:=(\mu; e,\ldots, e)\in G_n.
\end{equation*} 
Then $t$ has the properties that $t\cdot y = x_{r-1}$ and 
\begin{equation} \label{commute with stabilizer}
tut^{-1} = u \quad \mbox{ for each } u\in \Stab(x_r).
\end{equation}
We have a commuting diagram
\begin{equation} \label{commuting diagram 2}
\xymatrix{
H_*(\Stab(y)) \ar[rr]^{\kappa_*} \ar[d]_{\alpha(y)_*} && H_*(\Stab(x_{r-1})) \ar[d]^{\alpha(x_{r-1})_*} \\	
H_*(G_n; C_{r-1}(A)) \ar[rr]^-{\delta_*} && H_*(G_n; C_{r-1}(A))
}
\end{equation}
where the top arrow $\kappa_*$ is induced by the homomorphism $\kappa: \Stab(y)\to \Stab(x_{r-1}),\; u\mapsto tut^{-1}$ and the bottom arrow $\delta_*$ is induced by the inner automorphism $G_n \to G_n,\; u\mapsto tut^{-1}$ and the map $C_{r-1}(A)\to C_{r-1}(A),\; x\mapsto t\cdot x$.

By \eqref{commute with stabilizer}, we have $\kappa\circ\jmath=\iota$, so $\kappa_*\circ\jmath_* = \iota_*$. By \cite[Proposition III.8.1]{Br}, the homomorphism $\delta_*$ is the identity map on $H_*(G_n; C_{r-1}(A))$. Therefore, it follows from our two commuting diagrams \eqref{commuting diagram 1} and \eqref{commuting diagram 2} that
\begin{multline*}
\alpha(x_{r-1})_* \circ \iota_* = \alpha(x_{r-1})_* \circ \kappa_*\circ\jmath_* = \delta_*\circ \alpha(y)_* \circ \jmath* \\= \delta_* \circ (d_{i-1})_* \circ \alpha(x_r)_* = (d_{i-1})_* \circ \alpha(x_r)_*.
\end{multline*}
\end{proof}

We are now ready to prove Corollary \ref{stability}.

\begin{proof}[{Proof of Corollary \ref{stability}}]
We use induction on $m$. The base case $m=0$ is trivial.

Suppose $m\geqslant 1$. Let $n>2m$. 

Choose any free resolution $\cdots \to F_1 \to F_0 \to \Z$ of $\Z$ over $\Z G_n$. By taking the tensor product over $G_n$ of the two chain complexes $C_*(A)$ and $F_*$, we obtain a first-quadrant double complex $D$ with $D_{r,s} := F_s\otimes_{G_n} C_r(A)$. Let $\Tot_*(D)$ be the total complex of $D$. From Theorem \ref{connectivity} and the spectral sequence associated to the horizontal filtration of $\Tot_*(D)$, we deduce that $H_i(\Tot_*(D))=0$ for each $i\leqslant n-1$.

We now consider the spectral sequence associated to the vertical filtration of $\Tot_*(D)$. Since $H_i(\Tot_*(D))=0$ for each $i\leqslant n-1$, this spectral sequence has: 
\begin{equation} \label{infinity page}
E^{\infty}_{r,s}=0 \quad \mbox{ if } \quad r+s\leqslant n-1.
\end{equation} 
The $E^1$-terms of the spectral sequence are:
\begin{equation*}
E^1_{r,s} = H_s(F_* \otimes_{G_n} C_r(A)) = H_s(G_n; C_r(A)).
\end{equation*}
The differential $d^1 : E^1_{r,s} \to E^1_{r-1,s}$ is the map
\begin{equation*}
d_* = \sum_{i=1}^r (-1)^{i-1}(d_{i-1})_* : H_s(G_n; C_r(A)) \longrightarrow H_s(G_n; C_{r-1}(A)).
\end{equation*}
Recall that $G_{n-r} = \Stab(x_r)$. We shall identify $E^1_{r,s}$ with $H_s(G_{n-r})$ via the isomorphism $\alpha(x_r)_* : H_s(G_{n-r}) \to H_s(G_n; C_r(A))$. Under this identification, we see from Lemma \ref{lemma for quillen argument} that the differential $d^1 : E^1_{r,s} \to E^1_{r-1,s}$ is:
\begin{itemize}
\item
the map $\iota_*: H_s(G_{n-r}) \to H_s(G_{n-r+1})$ if $r$ is odd;
\item
the zero map if $r$ is even.
\end{itemize}
Therefore, row $s$ on the $E^1$-page of the spectral sequence is:
\begin{equation*}
\xymatrix{
H_s(G_n) & H_s(G_{n-1}) \ar[l]_-{\iota_*} & H_s(G_{n-2}) \ar[l]_-{0} & H_s(G_{n-3}) \ar[l]_-{\iota_*} & H_s(G_{n-4}) \ar[l]_-{0} & \cdots \ar[l]_-{\iota_*},
}
\end{equation*}
where the leftmost term is in column 0.

Our goal is to show that $\iota_*: H_m(G_{n-1}) \to H_m(G_n)$ is an isomorphism, or equivalently, that the differential $d^1 : E^1_{1,m} \to E^1_{0,m}$ is an isomorphism. Note that $E^2_{1,m}$ is the kernel of $d^1 : E^1_{1,m} \to E^1_{0,m}$, and $E^2_{0,m}$ is the cokernel of $d^1 : E^1_{1,m} \to E^1_{0,m}$. Therefore, we have to show that $E^2_{1,m}=0$ and $E^2_{0,m}=0$. We shall use the following:
\begin{claim} 
If $r+2s\leqslant 2m+1$ and $s<m$, then $E^2_{r,s}=0$.
\end{claim}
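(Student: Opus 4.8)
The plan is to compute the $E^2$-terms directly from the explicit description of the $E^1$-page that has already been recorded, and then to feed in the inductive hypothesis. Under the identification $E^1_{r,s}\cong H_s(G_{n-r})$ via $\alpha(x_r)_*$, the differential $d^1\colon E^1_{r,s}\to E^1_{r-1,s}$ is $\iota_*\colon H_s(G_{n-r})\to H_s(G_{n-r+1})$ when $r$ is odd and is the zero map when $r$ is even. Taking homology of row $s$ at the spot $r$ (using that the incoming differential comes from column $r+1$ and the outgoing one goes to column $r-1$) therefore gives
\begin{equation*}
E^2_{r,s}=
\begin{cases}
\operatorname{coker}\bigl(\iota_*\colon H_s(G_{n-r-1})\to H_s(G_{n-r})\bigr) & \text{if } r \text{ is even},\\[2pt]
\ker\bigl(\iota_*\colon H_s(G_{n-r})\to H_s(G_{n-r+1})\bigr) & \text{if } r \text{ is odd}.
\end{cases}
\end{equation*}
The case $r=0$ is the cokernel of the sole incoming differential, which agrees with the even formula. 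So in every case $E^2_{r,s}$ vanishes as soon as the relevant stabilization map $\iota_*$ in homological degree $s$ is an isomorphism.

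Since we are proving Corollary \ref{stability} by induction on $m$ and the hypothesis of the Claim forces $s<m$, I would invoke the inductive hypothesis in degree $s$: the map $\iota_*\colon H_s(G_{N-1})\to H_s(G_N)$ is an isomorphism whenever $N>2s$. To apply this I only need a lower bound on $n-r$. From $r+2s\leqslant 2m+1$ we get $r\leqslant 2m+1-2s$, and from $n>2m$ we get $n\geqslant 2m+1$; combining these gives $n-r\geqslant 2s$.

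For $r$ odd this bound already suffices: the relevant map is $\iota_*\colon H_s(G_{n-r})\to H_s(G_{n-r+1})$, that is, $\iota_*\colon H_s(G_{N-1})\to H_s(G_N)$ with $N=n-r+1\geqslant 2s+1>2s$, so it is an isomorphism and its kernel $E^2_{r,s}$ vanishes. The one place that needs care, and which I expect to be the only real obstacle, is the even case, where the cokernel of $\iota_*\colon H_s(G_{n-r-1})\to H_s(G_{n-r})$ requires the \emph{strict} inequality $n-r>2s$, whereas the crude bound only yields $n-r\geqslant 2s$. Here a parity observation closes the gap: $2m+1-2s$ is odd, so an even integer $r$ with $r\leqslant 2m+1-2s$ must in fact satisfy $r\leqslant 2m-2s$, whence $n-r\geqslant (2m+1)-(2m-2s)=2s+1>2s$. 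Thus $\iota_*$ is again an isomorphism and the cokernel $E^2_{r,s}$ vanishes, completing the proof of the Claim.
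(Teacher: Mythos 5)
Your proposal is correct and takes essentially the same route as the paper: both compute row $s$ of the $E^1$-page as the alternating sequence of $\iota_*$ and zero maps, apply the inductive hypothesis of Corollary \ref{stability} in degree $s<m$, and use the bound $2s\leqslant 2m+1-r<n-r+1$ for odd $r$. Your parity observation closing the even case is precisely the paper's ``stronger inequality'' $2s\leqslant 2m-r<n-r$, so there is nothing to add.
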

\begin{proof}[Proof of Claim]
We have: 
\begin{equation*}
2s \leqslant 2m+1-r < n-r+1.
\end{equation*} 
When $r$ is even, we have the stronger inequality:
\begin{equation*}
2s  \leqslant 2m-r < n-r.
\end{equation*}
Hence, it follows by the induction hypothesis that in row $s$ of the $E^1$-page of the spectral sequence, we have:
\begin{equation*}
\xymatrix{
\cdots & H_s(G_{n-r+1}) \ar[l] & H_s(G_{n-r}) \ar[l]_-{d^1_{r,s}} & H_s(G_{n-r-1}) \ar[l]_-{d^1_{r+1,s}} & \cdots \ar[l],
}
\end{equation*}
where
\begin{itemize}
\item
$d^1_{r,s}$ is an isomorphism and $d^1_{r+1,s}$ is the zero map if $r$ is odd;

\item 
$d^1_{r.s}$ is the zero map and $d^1_{r+1,s}$ is an isomorphism if $r$ is even.
\end{itemize}
Hence, $E^2_{r,s}=0$.

We have proven the Claim.
\end{proof}

The above Claim implies that $E^2_{1,m}=E^\infty_{1,m}$ and $E^2_{0,m}=E^\infty_{0,m}$. Indeed, for $k\geqslant 2$, a differential on the $E^k$-page has target $E^k_{1,m}$ or $E^k_{0,m}$ only if it starts at $E^k_{k+1,m-k+1}$ or $E^k_{k, m-k+1}$ respectively, but the Claim implies that $E^k_{k+1,m-k+1}$ and $E^k_{k, m-k+1}$ are both zero.

Finally, since $n \geqslant 2m+1 \geqslant 3$ and so
\begin{equation*}
m+1\leqslant \frac{n+1}{2} \leqslant n-1,
\end{equation*}
it follows from \eqref{infinity page} that $E^\infty_{1,m}=0$ and $E^\infty_{0,m}=0$, so $E^2_{1,m}=0$ and $E^2_{0,m}=0$.
\end{proof}

\begin{remark}
As the last step in the proof above shows, we only need to use the vanishing of $H_r(C_*(A))$ for $|A|\geqslant 3$ and $r\leqslant \frac{|A|+1}{2}$.
\end{remark}

\subsection*{Acknowledgments}
I am grateful to Nathalie Wahl and the referee for their many helpful suggestions to improve the paper.

\end{document}